\topskip \setlength{\parindent}{0pt} \setlength{\parskip}{5pt plus
\begin{document}
\newtheorem{theorem}{Theorem}
\newtheorem{lemma}[theorem]{Lemma}
\newtheorem{proposition}[theorem]{Proposition}
\newtheorem{corollary}[theorem]{Corollary}

\title{Nonlinear differential equation for Korobov numbers}
\author[D.S.~Kim]{Dae San Kim}
\address{Department of Mathematics, Sogang University, Seoul 121-742, South Korea}
\email{dskim@sogang.ac.kr}
\author[T.~Kim]{Taekyun Kim}
\address{Department of Mathematics, Kwangwoon University, Seoul 139-701,  South Korea}
\email{tkkim@kw.ac.kr}
\author[H.I.~Kwon]{Hyuck-In Kwon}
\address{Department of Mathematics, Kwangwoon University, Seoul 139-701, South Korea}
\email{sura@kw.ac.kr}
\author[T.~Mansour]{Toufik Mansour}
\address{Department of Mathematics, University of Haifa, 3498838 Haifa, Israel}
\email{tmansour@univ.haifa.ac.il}

\begin{abstract}
In this paper, we present nonlinear differential equations for
the generating functions for the Korobov numbers and for the
Frobenuius-Euler numbers. As an application, we find an explicit
expression for the $n$th derivative of $1/\log(1+t)$.
\bigskip

\noindent{\bf Keywords}: Korobov numbers; Frobenuius-Euler numbers
\end{abstract}
\maketitle

%-----------------------------------
\section{Introduction}
The {\em Krobov polynomials} $K_n(\lambda,x)$ of the first kind are given by
$$\frac{\lambda t}{(1+t)^\lambda-1}(1+t)^x=\sum_{n\geq0}K_n(\lambda,x)\frac{t^n}{n!}.$$
For example,
\begin{align*}
K_0(\lambda,x)&=1,\\
K_1(\lambda,x)&=\frac{1}{2}(2x-\lambda+1),\\
K_2(\lambda,x)&=\frac{1}{12}(6x^2-1+\lambda^2-6\lambda x),\\
K_3(\lambda,x)&=\frac{1}{24}(2\lambda
x-2x^2-\lambda+2x+1)(1-2x+\lambda).
\end{align*}
When $x=0$, $K_n(\lambda)=K_n(\lambda,0)$ are called the {\em
Korobov numbers of the first kind} or just {\em Korobov numbers}.
Since 2002, Korobov polynomials and numbers have been received a
lot of attention (see \cite{Kor1,KD1}). In particular, these
polynomials are used to derive some interpolation formulas of many
variables and a discrete analog of the Euler summation formula
(see \cite{Ust}). The {\em Frobenius-Euler numbers} $H_n(\mu)$ are
defined by the generating function (see \cite{A01,KM01,Y1,H1,R1})
$$\left(\frac{1-\mu}{e^t-\mu}\right)
=\sum_{n\geq0}H_n(\mu)\frac{t^n}{n!},\quad \mu \neq1.$$ Recently,
the degenerate Bernoulli and Euler polynomials related to Korobov
polynomials are studied by several authors
(\cite{C1,C2,KK1,KK2,K1,K2,K3,K4,KKS,KM01}) and Kim and Kim-Kim
derived some interesting identities of Frobenius-Euler polynomials
and the Bernoulli polynomials of the second kind arising from
nonlinear differential equations (see \cite{K1,K2,KK1}).

The main goal of this paper is to write a nonlinear differential
equation satisfying the generating function
$\frac{\lambda t}{(1+t)^\lambda-1}$ for Korobov numbers
$K_n(\lambda)$, and a nonlinear differential equation
satisfying the generating function
$\left(\frac{1-\mu}{e^t-\mu}\right)$ for Frobenius-Euler numbers
$H_n(\mu)$, see next sections. Also, we present in each case some
applications for our nonlinear differential equations. For
instance, we find an explicit expression for the $n$th derivative
of $1/\log(1+t)$, see Corollary \ref{coA01}.

\section{Korobov numbers}
Put $F=F(t)=F(t;\lambda)=\frac{1}{(1+t)^\lambda-1}$
($\lambda\neq0$). By differentiating respect to $t$, we have
\begin{align}
F^{(1)}&=\frac{-1}{((1+t)^\lambda-1)^2}\cdot\frac{\lambda(1+t)^\lambda}{1+t}
=\frac{-\lambda}{1+t}\cdot\frac{(1+t)^\lambda-1+1}{((1+t)^\lambda-1)^2}
=\frac{-\lambda}{1+t}(F+F^2).\label{eqA1}
\end{align}
Now, we let
$$F^{(N)}=\frac{(-1)^N\lambda}{(1+t)^N}\sum_{i=1}^{N+1}a_{i-1}(N;\lambda)F^i
=\frac{(-1)^N\lambda}{(1+t)^N}\sum_{i=1}^{N+1}a_{i-1}(N)F^i,$$ for
all $N\geq0$, and $a_i(N)=0$ for all $i\geq N+1$. Note that
$F=F^{(0)}=\lambda a_0(0)F$, which implies that
$a_0(0)=\frac{1}{\lambda}$. Also, by \eqref{eqA1}, we have
$a_0(1)=a_1(1)=1$. For $N+1$, we have
\begin{align*}
F^{(N+1)}&=\frac{d}{dt}\left(\frac{(-1)^{N}\lambda}{(1+t)^{N}}\sum_{i=1}^{N+1}a_{i-1}(N)F^i\right)\\
&=\frac{(-1)^{N+1}\lambda
N}{(1+t)^{N+1}}\sum_{i=1}^{N+1}a_{i-1}(N)F^i
+\frac{(-1)^N\lambda}{(1+t)^N}\sum_{i=1}^{N+1}ia_{i-1}(N)F^{i-1}F^{(1)},
\end{align*}
which, by \eqref{eqA1}, gives
\begin{align*}
F^{(N+1)}&=\frac{(-1)^{N+1}\lambda
N}{(1+t)^{N+1}}\sum_{i=1}^{N+1}a_{i-1}(N)F^i
+\frac{(-1)^N\lambda}{(1+t)^N}\sum_{i=1}^{N+1}ia_{i-1}(N)F^{i-1}\frac{-\lambda(F+F^2)}{1+t}\\
&=\frac{(-1)^{N+1}\lambda}{(1+t)^{N+1}}\left(\sum_{i=1}^{N+1}Na_{i-1}(N)F^i
+\sum_{i=1}^{N+1}\lambda ia_{i-1}(N)F^i+\sum_{i=2}^{N+2}\lambda(i-1)a_{i-2}(N)F^{i}\right)\\
\end{align*}
By assumption,
$F^{(N+1)}=\frac{(-1)^{N+1}\lambda}{(1+t)^{N+1}}\sum_{i=1}^{N+2}a_{i-1}(N+1)F^i$,
and, by comparing the coefficients of $F^i$ on both sides, we
obtain the following recurrence relation
\begin{align}
a_{i-1}(N+1)=(N+i\lambda)a_{i-1}(N)+\lambda(i-1)a_{i-2}(N),\quad
i=2,\ldots,N+1\label{eqA2}
\end{align}
with $a_j(N+1)=0$ whenever $j\geq N+2$,
\begin{align}
a_0(N+1)=(N+\lambda)a_0(N),\quad
a_{N+1}(N+1)=\lambda(N+1)a_N(N).
\end{align}

Recalling that $a_0(0)=\frac{1}{\lambda}$ and $a_0(1)=a_1(1)=1$, by induction on $N$, we
have
\begin{align*}
a_0(N+1)&=(N+\lambda)(N-1+\lambda)\cdots(1+\lambda)=(N+\lambda)_N,\\
a_{N+1}(N+1)&=a_1(1)\prod_{j=2}^{N+1}(j\lambda)=\lambda^N(N+1)!.
\end{align*}
In next lemma, we treat the general case.

\begin{lemma}\label{lemA1}
The coefficients $a_j(N)$, $j=1,2,\ldots,N$, satisfy
$$a_j(N)=j\lambda\sum_{i=0}^{N-j}(N+(j+1)\lambda-1)_ia_{j-1}(N-i-1).$$
\end{lemma}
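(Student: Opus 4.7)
The plan is to iterate the recurrence \eqref{eqA2}. After the relabelling $j = i-1$, \eqref{eqA2} reads
\begin{equation*}
a_j(M) = \bigl(M - 1 + (j+1)\lambda\bigr)\, a_j(M-1) + \lambda j\, a_{j-1}(M-1),
\end{equation*}
which I view as a first-order inhomogeneous recurrence in $M$ for each fixed $j$, with forcing term $\lambda j\, a_{j-1}(M-1)$.

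Applying this recurrence repeatedly, peeling off one step at a time from $M=N$ down to $M=j+1$, I expect to obtain
\begin{equation*}
a_j(N) = \Bigl(\prod_{l=0}^{N-j-1}\bigl(N - 1 - l + (j+1)\lambda\bigr)\Bigr) a_j(j) + \lambda j \sum_{i=0}^{N-j-1} \Bigl(\prod_{l=0}^{i-1}\bigl(N - 1 - l + (j+1)\lambda\bigr)\Bigr) a_{j-1}(N-1-i).
\end{equation*}
Each inner product collapses into a falling factorial $(N + (j+1)\lambda - 1)_i$, matching the form in the statement. If the telescoping needs to be made rigorous, a short induction on $N-j$ suffices, the inductive step being one application of the recurrence above.

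It remains to fold the leading ``homogeneous'' term into the sum. For this I would invoke the boundary identity $a_j(j) = j\lambda\, a_{j-1}(j-1)$, which is precisely the already-established relation $a_{N+1}(N+1) = \lambda(N+1)\, a_N(N)$ specialized at $N = j-1$. That rewrites the standalone term as the $i = N-j$ summand, so the whole right-hand side collapses to
\begin{equation*}
a_j(N) = j\lambda\sum_{i=0}^{N-j}\bigl(N + (j+1)\lambda - 1\bigr)_i\, a_{j-1}(N-i-1),
\end{equation*}
as claimed.

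No deep obstacle arises, since the recurrence is linear and first-order in $M$; the only real care is bookkeeping. In particular I must keep the empty-product convention at $i=0$ consistent with $(N+(j+1)\lambda-1)_0 = 1$, and track that the unrolling stops exactly at $M = j$ (so that the tail factor $a_j(j)$ appears, rather than $a_j(j-1)$, which is outside the regime where the recurrence is valid).
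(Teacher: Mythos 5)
Your proposal is correct and follows essentially the same route as the paper: both unroll the first-order recurrence $a_j(M)=(M-1+(j+1)\lambda)a_j(M-1)+j\lambda\,a_{j-1}(M-1)$ down to $M=j$, identify the accumulated products with the falling factorials $(N+(j+1)\lambda-1)_i$, and absorb the homogeneous boundary term via $a_j(j)=j\lambda\,a_{j-1}(j-1)$ (equivalently, the initial condition $a_j(j)=\lambda^{j-1}j!$ used in the paper). Your version is in fact more explicit about the telescoping and the folding of the $i=N-j$ term than the paper's rather terse ``by induction'' step.
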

\begin{proof}
By \eqref{eqA2}, we have
\begin{align*}
a_j(N+1)&=j\lambda a_{j-1}(N)+(N+(j+1)\lambda)a_j(N) \\
&=j\lambda a_{j-1}(N)
+j\lambda(N+(j+1)\lambda)a_{j-1}(N-1)\\
&+(N+(j+1)\lambda)(N+(j+1)\lambda-1)a_j(N-1).
\end{align*}
By induction and the initial condition $a_j(j)=\lambda^{j-1}j!$,
we obtain
$$a_j(N+1)=j\lambda\sum_{i=0}^{N+1-j}(N+(j+1)\lambda)_ia_{j-1}(N-i),$$
as required.
\end{proof}

By Lemma \ref{lemA1}, we can state the following result.
\begin{theorem}\label{thA1}
The function $F=F(t)=\frac{1}{(1+t)^\lambda-1}$ with
$\lambda\neq0$ satisfies the nonlinear differential equation
$$F^{(N)}=\frac{(-1)^N\lambda}{(1+t)^N}\sum_{i=1}^{N+1}a_{i-1}(N)F^i,$$
where $a_0(N)=(N+\lambda-1)_{N-1}$ with
$a_0(0)=\frac{1}{\lambda}$, and
$$a_j(N)=j\lambda\sum_{i=0}^{N-j}(N+(j+1)\lambda-1)_ia_{j-1}(N-i-1),$$
for $j=1,2,\ldots,N$.
\end{theorem}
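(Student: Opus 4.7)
The strategy is that essentially all of the work has already been done in the discussion preceding Lemma \ref{lemA1}; the theorem is obtained by assembling those intermediate facts, so my proof plan has three short steps.

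First, I would establish the shape
$$F^{(N)}=\frac{(-1)^N\lambda}{(1+t)^N}\sum_{i=1}^{N+1}a_{i-1}(N)F^i$$
by induction on $N$, with the base case $N=0$ forcing $a_0(0)=1/\lambda$ (and $N=1$ giving $a_0(1)=a_1(1)=1$ via \eqref{eqA1}). For the inductive step I would differentiate the right-hand side at level $N$ and use \eqref{eqA1} to replace $F^{(1)}=-\lambda(F+F^2)/(1+t)$. Collecting like powers of $F$ and comparing coefficients yields the interior recurrence \eqref{eqA2} together with the boundary relations $a_0(N+1)=(N+\lambda)a_0(N)$ and $a_{N+1}(N+1)=\lambda(N+1)a_N(N)$; this is exactly the computation already displayed.

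Second, I would extract the closed form for $a_0(N)$ by unrolling the linear recurrence $a_0(N+1)=(N+\lambda)a_0(N)$ starting from $a_0(1)=1$. This gives
$$a_0(N)=(N-1+\lambda)(N-2+\lambda)\cdots(1+\lambda)=(N+\lambda-1)_{N-1}$$
for $N\geq 1$, with $a_0(0)=1/\lambda$ handled separately as the initial datum. Third, for $j\geq 1$ the stated formula
$$a_j(N)=j\lambda\sum_{i=0}^{N-j}(N+(j+1)\lambda-1)_ia_{j-1}(N-i-1)$$
is precisely Lemma \ref{lemA1}, whose induction on $N$ (using \eqref{eqA2} and the starting value $a_j(j)=\lambda^{j-1}j!$) has already been carried out.

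The only real obstacle is bookkeeping: one must verify that the falling-factorial convention $(x)_k$ used in Lemma \ref{lemA1} lines up with the one appearing in the theorem (a simple index shift $N\mapsto N-i-1$), and that the closed forms reproduce the small-case data $a_0(0)=1/\lambda$, $a_0(1)=a_1(1)=1$, and $a_{N+1}(N+1)=\lambda^N(N+1)!$. No new analytic ingredient beyond \eqref{eqA1}, \eqref{eqA2}, and Lemma \ref{lemA1} is required.
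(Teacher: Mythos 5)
Your proposal is correct and follows essentially the same route as the paper: the displayed form of $F^{(N)}$ and the recurrence \eqref{eqA2} with its boundary cases are obtained by the inductive differentiation using \eqref{eqA1}, the formula for $a_0(N)$ comes from unrolling $a_0(N+1)=(N+\lambda)a_0(N)$, and the general $a_j(N)$ is exactly Lemma \ref{lemA1}. The paper itself presents Theorem \ref{thA1} as a direct assembly of these preceding computations, so nothing further is needed.
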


As an application for Theorem \ref{thA1}, let us consider the
limit $\lim_{\lambda\rightarrow0}\lambda F(t)$. By the fact that
$\lim_{\lambda\rightarrow0}\lambda F(t)=\frac{1}{\log(1+t)}$,
we obtain
$$\frac{d^N}{dt^N}\frac{1}{\log(1+t)}
=\frac{(-1)^N}{(1+t)^N}\sum_{i=2}^{N+1}\lim_{\lambda\rightarrow0}\lambda^{2-i}a_{i-1}(N;\lambda)
\frac{1}{\log^i(1+t)}.$$ Combining with the previous result in
\cite{KK1}, we obtain
$$\lim_{\lambda\rightarrow0}\lambda^{2-i}a_{i-1}(N;\lambda)=(i-1)!(N-1)!H_{N-1,i-2},$$
where $2\leq i\leq N+1$ and $H_{N,j}$ is given by
$$H_{N,j}=\left\{\begin{array}{ll}
1,& j=0,\\
H_N=\sum_{i=1}^N\frac{1}{i},&j=1,\\
\sum_{i=1}^N\frac{H_{i-1,j-1}}{i},&2\leq j\leq
N,\end{array}\right.$$ with $H_{0,j-1}=0$ when $j\geq2$. Hence, by
Theorem \ref{thA1}, we can state the following corollary.
\begin{corollary}\label{coA01}
For all $N\geq1$,
$$\frac{d^N}{dt^N}\frac{1}{\log(1+t)}
=\frac{(-1)^N(N-1)!}{(1+t)^N}\sum_{i=2}^{N+1}\frac{(i-1)!H_{N-1,i-2}}{\log^i(1+t)}.$$
\end{corollary}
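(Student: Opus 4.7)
The plan is to obtain Corollary \ref{coA01} by letting $\lambda\to 0$ in the nonlinear differential equation of Theorem \ref{thA1}, after first multiplying it through by $\lambda$. The starting observation is
$$\lim_{\lambda\to 0}\lambda F(t;\lambda)=\lim_{\lambda\to 0}\frac{\lambda}{(1+t)^\lambda-1}=\frac{1}{\log(1+t)},$$
which follows from the expansion $(1+t)^\lambda=e^{\lambda\log(1+t)}=1+\lambda\log(1+t)+O(\lambda^2)$. Because $\lambda F(t;\lambda)$ is a holomorphic function of $t$ on a punctured neighborhood of the origin, depending holomorphically on $\lambda$ near $0$, the convergence is locally uniform in $t$, so by the standard Weierstrass-type argument derivatives in $t$ commute with the limit; hence $\lim_{\lambda\to 0}\lambda F^{(N)}(t;\lambda)$ equals the left-hand side $\frac{d^N}{dt^N}\frac{1}{\log(1+t)}$ of the corollary.

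Next, I would rewrite the right-hand side of Theorem \ref{thA1} after multiplication by $\lambda$, absorbing one factor of $\lambda$ into each $F$, so as to expose the quantity $\lambda F$:
$$\lambda F^{(N)}=\frac{(-1)^N}{(1+t)^N}\sum_{i=1}^{N+1}\lambda^{2-i}a_{i-1}(N;\lambda)(\lambda F)^i.$$
The $i=1$ term has coefficient $\lambda a_0(N;\lambda)=\lambda(N+\lambda-1)_{N-1}$, which tends to $0$ and therefore drops out. For $2\le i\le N+1$, the factor $(\lambda F)^i$ tends to $1/\log^i(1+t)$, and what remains to establish is the asymptotic
$$\lim_{\lambda\to 0}\lambda^{2-i}a_{i-1}(N;\lambda)=(i-1)!(N-1)!H_{N-1,i-2}.$$
Granting this identity, factoring $(N-1)!$ out of the sum produces exactly the expression stated in the corollary.

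Establishing this asymptotic is the main substance of the proof. By induction on $i$ using Lemma \ref{lemA1}, one first checks that $a_{i-1}(N;\lambda)$ vanishes to order precisely $i-2$ at $\lambda=0$: the only source of a pole in the recursion is the initial value $a_0(0)=1/\lambda$, and that pole is cancelled by the explicit $\lambda$ prefactor that Lemma \ref{lemA1} generates. The harder step is matching the leading coefficient with the nested harmonic sum: one evaluates the recurrence of Lemma \ref{lemA1} at $\lambda=0$, normalizes by $(i-1)!(N-1)!$, and observes that the resulting recursion for the leading coefficient of $a_{i-1}(N;\lambda)/\lambda^{i-2}$ coincides with the recursive definition of $H_{N-1,i-2}$. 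This is exactly the combinatorial identification already carried out in \cite{KK1}, which I would invoke directly. Once this coefficient identity is in hand, passing to the limit term by term in the display above yields the corollary.
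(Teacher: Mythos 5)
Your proposal follows essentially the same route as the paper: multiply the equation of Theorem \ref{thA1} by $\lambda$, pass to the limit $\lambda\to0$ using $\lim_{\lambda\to0}\lambda F=\frac{1}{\log(1+t)}$, discard the vanishing $i=1$ term, and identify $\lim_{\lambda\to0}\lambda^{2-i}a_{i-1}(N;\lambda)=(i-1)!(N-1)!H_{N-1,i-2}$ by appeal to \cite{KK1}, exactly as the paper does. The extra remarks you supply (locally uniform convergence justifying the interchange of limit and differentiation, and the sketch of the order of vanishing of $a_{i-1}(N;\lambda)$ at $\lambda=0$) are correct refinements of the same argument rather than a different approach.
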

For example,
\begin{align*}
\frac{d}{dt}\frac{1}{\log(1+t)}&=\frac{-1}{1+t}\frac{1}{\log^2(1+t)},\\
\frac{d^2}{dt^2}\frac{1}{\log(1+t)}&=\frac{1}{(1+t)^2}\left(\frac{1}{\log^2(1+t)}+\frac{2}{\log^3(1+t)}\right),\\
\frac{d^3}{dt^3}\frac{1}{\log(1+t)}&=\frac{-1}{(1+t)^3}\left(\frac{2}{\log^2(1+t)}+\frac{6}{\log^3(1+t)}+\frac{6}{\log^4(1+t)}\right).
\end{align*}

As another application, let us consider the generating function
for the Korobov numbers
$$\frac{\lambda
t}{(1+t)^\lambda-1}=\sum_{n\geq0}K_n(\lambda)\frac{t^n}{n!}.$$
More generally, the Korobov numbers of order $m$ are defined via the
following generating function
$$\left(\frac{\lambda
t}{(1+t)^\lambda-1}\right)^m=\sum_{n\geq0}K_n^{(m)}(\lambda)\frac{t^n}{n!}.$$

\begin{theorem}
For all $N\geq1$,
\begin{align*}
&\sum_{i=0}^{\min(n,N)}\lambda^{i-N+1}(n)_ia_{N-i}(N)K_{n-i}^{(N+1-i)}(\lambda)\\
&\qquad\qquad\qquad=\left\{\begin{array}{ll} N!(N)_n,&0\leq n\leq N,\\
(-1)^N\sum_{\ell=0}^{n-N-1}\binom{N}{\ell}\frac{K_{n-\ell}(\lambda)}{n-\ell}(n)_{N+1+\ell},&
n\geq N+1.
\end{array}\right.
\end{align*}
\end{theorem}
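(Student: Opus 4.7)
The plan is to expand $(1+t)^N F^{(N)}(t)$ as a Laurent series at $t=0$ in two different ways and compare the coefficient of $t^{n-N-1}$. One expansion uses Theorem~\ref{thA1}; the other uses the defining Laurent expansion of $F(t)=1/((1+t)^\lambda-1)$, which has a simple pole at the origin because $\lambda tF(t)=\sum_{n\geq 0}K_n(\lambda)t^n/n!$ starts with $K_0(\lambda)=1$.

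First, multiplying the formula of Theorem~\ref{thA1} by $(1+t)^N$ gives
$$(1+t)^N F^{(N)}=(-1)^N\lambda\sum_{j=0}^{N}a_j(N)F^{j+1}.$$
Using $(\lambda t)^{j+1}F^{j+1}=\bigl(\lambda t F\bigr)^{j+1}=\sum_{n\geq 0}K_n^{(j+1)}(\lambda)t^n/n!$, each $F^{j+1}$ is a Laurent series with leading term $(\lambda t)^{-(j+1)}$. The coefficient of $t^{n-N-1}$ on the right is then extracted by solving $n'-j-1=n-N-1$ and reindexing with $i=N-j$; this produces
$$(-1)^N\sum_{i=0}^{\min(n,N)}\lambda^{i-N+1}a_{N-i}(N)\frac{K_{n-i}^{(N+1-i)}(\lambda)}{(n-i)!},$$
the cutoff $\min(n,N)$ being automatic from the convention $K_{n-i}^{(\cdot)}(\lambda)=0$ for $n-i<0$. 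Multiplying by $n!$ converts $1/(n-i)!$ into $(n)_i$ and gives exactly $(-1)^N$ times the left-hand side of the theorem.

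Second, term-wise $N$-fold differentiation of $\lambda F(t)=\sum_{n\geq 0}K_n(\lambda)t^{n-1}/n!$ gives
$$\lambda F^{(N)}(t)=(-1)^N N!\,t^{-N-1}+\sum_{n\geq N+1}K_n(\lambda)(n-1)_N\frac{t^{n-N-1}}{n!},$$
the terms $n=1,\dots,N$ dropping out because a zero factor appears in the falling factorial. Multiplying by $(1+t)^N=\sum_{\ell=0}^{N}\binom{N}{\ell}t^\ell$ and extracting the coefficient of $t^{n-N-1}$ splits into two regimes: for $0\leq n\leq N$ only the pole contributes, yielding $(-1)^N N!\binom{N}{n}$, whereas for $n\geq N+1$ only the regular tail contributes, yielding $\sum_{\ell=0}^{n-N-1}\binom{N}{\ell}K_{n-\ell}(\lambda)(n-\ell-1)_N/(n-\ell)!$. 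After multiplying by $n!$, the first regime simplifies to $(-1)^N N!(N)_n$ via $n!\binom{N}{n}=(N)_n$, and the elementary identity $n!\,(n-\ell-1)_N/(n-\ell)!=(n)_{N+1+\ell}/(n-\ell)$ turns the second regime into $\sum_{\ell=0}^{n-N-1}\binom{N}{\ell}K_{n-\ell}(\lambda)(n)_{N+1+\ell}/(n-\ell)$.

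Equating the two extractions and dividing by the common $(-1)^N$ yields exactly the piecewise formula claimed. The only real obstacle is the bookkeeping: keeping the signs $(-1)^N$, the factors $\lambda^{1-j}$, and the pole of $F$ at $t=0$ properly synchronized. In particular, the apparent asymmetry of the right-hand side between the two cases of $n$ is explained precisely by whether the singular term $(-1)^N N!\,t^{-N-1}$ or the regular tail of $\lambda F^{(N)}$ supplies the extracted coefficient.
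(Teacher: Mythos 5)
Your proof is correct and follows essentially the same route as the paper: one side comes from Theorem~\ref{thA1} rewritten in terms of $\bigl(\lambda tF\bigr)^{j+1}$ and hence the order-$(N{+}1{-}i)$ Korobov numbers, the other from term-wise differentiation of the Laurent expansion $\lambda F=t^{-1}+\sum_{n\geq1}K_n(\lambda)t^{n-1}/n!$, with the paper merely multiplying by $t^{N+1}$ to clear the pole before comparing coefficients of $t^n/n!$ where you compare Laurent coefficients of $t^{n-N-1}$ directly. The only cosmetic wrinkle is that your first displayed extraction carries the exponent $\lambda^{i-N+1}$, which is the coefficient in $\lambda(1+t)^NF^{(N)}$ rather than in $(1+t)^NF^{(N)}$; since your second extraction is also taken from $\lambda F^{(N)}$, the two sides match and the final identity is exactly the theorem.
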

\begin{proof}
Note that
$$F=\frac{1}{(1+t)^\lambda-1}=\frac{1}{\lambda}\left(\frac1t+
\sum_{n\geq0}K_{n+1}(\lambda)\frac{t^{n}}{(n+1)!}\right).$$ Thus,
\begin{align*}
F^{(N)}=\frac{1}{\lambda}\left((-1)^NN!t^{-N-1}+ \sum_{n\geq
N}K_{n+1}(\lambda)(n)_N\frac{t^{n-N}}{(n+1)!}\right),
\end{align*}
which implies
\begin{align*}
t^{N+1}F^{(N)}=\frac{1}{\lambda}\left((-1)^NN!+ \sum_{n\geq
N}K_{n+1}(\lambda)(n)_N\frac{t^{n+1}}{(n+1)!}\right).
\end{align*}
Multiplying both sides by $(1+t)^N$, we obtain
\begin{align}
&(1+t)^Nt^{N+1}F^{(N)}\notag\\
&=\frac{1}{\lambda}\left((-1)^NN!\sum_{n=0}^N(N)_n\frac{t^n}{n!}+
\sum_{\ell=0}^N(N)_\ell\frac{t^\ell}{\ell!}\sum_{n\geq
N}K_{n+1}(\lambda)(n)_N\frac{t^{n+1}}{(n+1)!}\right)\notag\\
&=\frac{1}{\lambda}\left((-1)^NN!\sum_{n=0}^N(N)_n\frac{t^n}{n!}+
\sum_{n\geq
N+1}\sum_{\ell=0}^{n-N-1}\binom{N}{\ell}\frac{K_{n-\ell}(\lambda)}{n-\ell}(n)_{N+1+\ell}\frac{t^n}{n!}\right).\label{eqAA1}
\end{align}
On the other hand, by Theorem \ref{thA1}, we have
\begin{align*}
(1+t)^Nt^{N+1}F^{(N)}&=(-1)^N\lambda\sum_{i=1}^{N+1}a_{i-1}(N)\frac{t^{N+1}}{((1+t)^\lambda-1)^i}\\
&=(-1)^N\lambda\sum_{i=1}^{N+1}\lambda^{-i}a_{i-1}(N)\left(\frac{\lambda t}{(1+t)^\lambda-1}\right)^i t^{N+1-i}\\
&=(-1)^N\lambda\sum_{i=0}^{N}\lambda^{i-N-1}a_{N-i}(N)\left(\frac{\lambda
t}{(1+t)^\lambda-1}\right)^{N+1-i}t^{i}.
\end{align*}
Thus by the generating function for the Korobov numbers. we obtain
\begin{align*}
(1+t)^Nt^{N+1}F^{(N)}&=(-1)^N\lambda\sum_{i=0}^{N}\left(\lambda^{i-N-1}a_{N-i}(N)t^i\sum_{m\geq0}K_m^{(N+1-i)}(\lambda)\frac{t^m}{m!}
\right)\\
&=(-1)^N\sum_{n\geq0}\left(\sum_{i=0}^{\min(n,N)}\lambda^{i-N}(n)_ia_{N-i}(N)K_{n-i}^{(N+1-i)}(\lambda)
\right)\frac{t^n}{n!}.
\end{align*}
By combining this equation with \eqref{eqAA1}, we complete the
proof.
\end{proof}
\section{Frobenuius-Euler numbers}
Set $F=F(t)=F(t;\lambda,\mu)=\frac{1}{(1+\lambda
t)^{\frac{1}{\lambda}}-\mu}$ ($\lambda\neq0$). By differentiating respect to
$t$, we have
\begin{align}
F^{(1)}&=\frac{-1}{((1+\lambda
t)^{\frac{1}{\lambda}}-\mu)^2}\cdot\frac{(1+\lambda t)^{\frac{1}{\lambda}}}{1+\lambda t}
=\frac{-1}{1+\lambda t}\cdot\frac{(1+\lambda
t)^{\frac{1}{\lambda}}-\mu+\mu}{((1+\lambda t)^{\frac{1}{\lambda}}-\mu)^2}
=\frac{-1}{1+\lambda t}(F+\mu F^2).\label{eqB1}
\end{align}
Now, we let
$$F^{(N)}=\frac{(-1)^N}{(1+\lambda t)^N}\sum_{i=1}^{N+1}b_{i-1}(N;\lambda,\mu)F^i
=\frac{(-1)^N}{(1+\lambda
t)^N}\sum_{i=1}^{N+1}b_{i-1}(N)F^i,$$ for all $N\geq0$, and
$a_i(N)=0$ for all $i\geq N+1$. Note that $F=F^{(0)}=b_0(0)F$,
which implies that $b_0(0)=1$. Also, by \eqref{eqB1}, we have
$b_0(1)=1$ and $b_1(1)=\mu$. For $N+1$, we have
\begin{align*}
F^{(N+1)}&=\frac{d}{dt}\left(\frac{(-1)^{N}}{(1+\lambda t)^{N}}\sum_{i=1}^{N+1}b_{i-1}(N)F^i\right)\\
&=\frac{(-1)^{N+1}\lambda N}{(1+\lambda
t)^{N+1}}\sum_{i=1}^{N+1}b_{i-1}(N)F^i +\frac{(-1)^N}{(1+\lambda
t)^N}\sum_{i=1}^{N+1}ib_{i-1}(N)F^{i-1}F^{(1)},
\end{align*}
which, by \eqref{eqB1}, gives
\begin{align*}
F^{(N+1)}&=\frac{(-1)^{N+1}\lambda N}{(1+\lambda
t)^{N+1}}\sum_{i=1}^{N+1}b_{i-1}(N)F^i
+\frac{(-1)^N\lambda}{(1+\lambda t)^N}\sum_{i=1}^{N+1}ib_{i-1}(N)F^{i-1}\frac{-(F+\mu F^2)}{1+\lambda t}\\
&=\frac{(-1)^{N+1}}{(1+\lambda
t)^{N+1}}\left(\sum_{i=1}^{N+1}(N\lambda+i)b_{i-1}(N)F^i
+\sum_{i=2}^{N+2}(i-1)\mu b_{i-2}(N)F^i\right).
\end{align*}
By assumption, $F^{(N+1)}=\frac{(-1)^{N+1}}{(1+\lambda
t)^{N+1}}\sum_{i=1}^{N+2}b_{i-1}(N+1)F^i$, and, by comparing the
coefficients of $F^i$ on both sides, we obtain the following
recurrence relation
\begin{align}
b_{i-1}(N+1)=(N\lambda+i)b_{i-1}(N)+\mu(i-1)b_{i-2}(N),\quad
i=2,\ldots,N+1\label{eqB2}
\end{align}
with $b_j(N+1)=0$ whenever $j\geq N+2$,
\begin{align}
b_0(N+1)=(N\lambda+1)b_0(N),\quad
b_{N+1}(N+1)=\mu(N+1)b_N(N).
\end{align}

Recalling that $b_0(0)=1$, $b_0(1)=1$ and
$b_1(1)=\mu$, by induction on $N$, we have
\begin{align*}
b_0(N+1)&=(N\lambda+1)((N-1)\lambda+1)\cdots(\lambda+1)=(N\lambda+1|\lambda)_N,\\
b_{N+1}(N+1)&=\mu^{N+1}(N+1)!,
\end{align*}
where $(x\mid\lambda)_n=x(x-\lambda)\cdots(x-(n-1)\lambda)$.

In next lemma, we treat the general case.
\begin{lemma}\label{lemB1}
The coefficients $b_j(N)$, $j=1,2,\ldots,N$, satisfy
$$b_j(N)=j\mu\sum_{i=0}^{N-j}((N-1)\lambda+j+1\mid\lambda)_ib_{j-1}(N-i-1).$$
\end{lemma}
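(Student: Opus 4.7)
The plan is to mimic the proof of Lemma~\ref{lemA1}, replacing the Korobov recurrence \eqref{eqA2} with its Frobenius--Euler analogue \eqref{eqB2}. Setting $i-1=j$ in \eqref{eqB2} extracts the two-term recurrence
$$b_j(N+1)=(N\lambda+j+1)\,b_j(N)+j\mu\,b_{j-1}(N),$$
and the strategy is to iterate it: each substitution of this identity into the surviving $b_j(\cdot)$ term on the right strips off one more copy of $b_{j-1}$ and multiplies the residual coefficient by an additional linear factor.

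Carrying out the first unfolding explicitly gives
$$b_j(N+1)=j\mu\,b_{j-1}(N)+j\mu(N\lambda+j+1)\,b_{j-1}(N-1)+(N\lambda+j+1)((N-1)\lambda+j+1)\,b_j(N-1),$$
from which the general shape of the expansion is evident: after $k$ unfoldings the coefficient of $b_{j-1}(N-l)$ is $j\mu\prod_{m=0}^{l-1}((N-m)\lambda+j+1)$, while the residual term carries the product $\prod_{m=0}^{k-1}((N-m)\lambda+j+1)$ multiplied by $b_j(N-k+1)$. Recognising the product $(N\lambda+j+1)((N-1)\lambda+j+1)\cdots((N-l+1)\lambda+j+1)$ as the $\lambda$-shifted Pochhammer $(N\lambda+j+1\mid\lambda)_l$, I would package the sum into the claimed form and conclude by induction on $N-j$, with base case furnished by the initial value $b_j(j)=\mu^j j!$.

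The main obstacle will be pinning down where the iteration terminates and confirming that the indexing matches the statement. The residual $b_j(N-k+1)$ vanishes precisely when $N-k+1<j$, so the iteration runs to $k=N+2-j$; one must verify that this stopping point produces the upper summation bound $N+1-j$ (equivalently $N-j$ after the shift $N+1\mapsto N$) and that the accumulated linear factors assemble into $((N-1)\lambda+j+1\mid\lambda)_i$ with the correct top argument. Since $(x\mid\lambda)_n$ decreases in steps of $\lambda$ rather than $1$, this bookkeeping is slightly subtler than the Korobov case in Lemma~\ref{lemA1}, but is otherwise entirely mechanical.
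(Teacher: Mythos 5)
Your proposal is correct and follows essentially the same route as the paper: it takes the $i=j+1$ case of the recurrence \eqref{eqB2}, performs exactly the same first unfolding, identifies the accumulated linear factors as the $\lambda$-shifted factorial $(N\lambda+j+1\mid\lambda)_\ell$, and closes by induction using the initial value $b_j(j)=\mu^j j!$. The indexing bookkeeping you flag as the main obstacle works out exactly as you describe, matching the paper's displayed identity $b_j(N+1)=j\mu\sum_{i=0}^{N+1-j}(N\lambda+j+1\mid\lambda)_i\,b_{j-1}(N-i)$.
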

\begin{proof}
By \eqref{eqB2}, we have
\begin{align*}
b_j(N+1)&=j\mu b_{j-1}(N)+(N\lambda+j+1)b_j(N) \\
&=j\mu b_{j-1}(N)
+j\mu(N\lambda+j+1)b_{j-1}(N-1)\\
&+(N\lambda+j+1)((N-1)\lambda+j+1)b_j(N-1).
\end{align*}
By induction and the initial condition $b_j(j)=\mu^{j}j!$, we
obtain
$$b_j(N+1)=j\mu\sum_{i=0}^{N+1-j}(N\lambda+j+1|\lambda)_ib_{j-1}(N-i),$$
as required.
\end{proof}

By Lemma \ref{lemB1}, we can state the following result.
\begin{theorem}\label{thB1}
The function $F=F(t)=\frac{1}{(1+\lambda t)^{\frac{1}{\lambda}}-\mu}$ with
$\lambda\neq0$ satisfies the nonlinear differential equation
$$F^{(N)}=\frac{(-1)^N}{(1+\lambda t)^N}\sum_{i=1}^{N+1}b_{i-1}(N)F^i,$$
where $b_0(N)=((N-1)\lambda+1|\lambda)_{N-1} (N\geq1)$ with $b_0(0)=1$, and
$$b_j(N)=j\mu\sum_{i=0}^{N-j}((N-1)\lambda+j+1|\lambda)_ib_{j-1}(N-i-1),$$
for $j=1,2,\ldots,N$.
\end{theorem}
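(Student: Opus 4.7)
The plan is to observe that Theorem \ref{thB1} is essentially a consolidation of the material derived just above it, so the proof reduces to assembling three already-established ingredients into a single inductive argument.

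First, I would verify the ansatz $F^{(N)}=\frac{(-1)^N}{(1+\lambda t)^N}\sum_{i=1}^{N+1}b_{i-1}(N)F^i$ by induction on $N$. The base case $N=0$ holds since $b_0(0)=1$. For the inductive step, the computation is precisely the one performed just before equation \eqref{eqB2}: differentiate the ansatz, use \eqref{eqB1} to replace $F^{(1)}$ by $\frac{-1}{1+\lambda t}(F+\mu F^2)$, and reindex the sum containing $F^{i+1}$ so that all terms have the same power $F^i$. The ansatz is preserved at level $N+1$ precisely when the coefficients satisfy
$$b_{i-1}(N+1)=(N\lambda+i)b_{i-1}(N)+\mu(i-1)b_{i-2}(N),$$
together with the boundary cases $b_0(N+1)=(N\lambda+1)b_0(N)$ and $b_{N+1}(N+1)=\mu(N+1)b_N(N)$.

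Next, I would extract the claimed closed forms. The one-term recurrence $b_0(N+1)=(N\lambda+1)b_0(N)$ with $b_0(1)=1$ telescopes into
$$b_0(N)=\prod_{k=1}^{N-1}(k\lambda+1)=((N-1)\lambda+1\mid\lambda)_{N-1},\qquad N\geq 1,$$
which is the stated formula for the top row. For $1\leq j\leq N$, the required identity is exactly the conclusion of Lemma \ref{lemB1}, after the harmless index shift $N\mapsto N-1$ that converts the lemma's statement for $b_j(N+1)$ into the statement for $b_j(N)$ used in the theorem.

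The only real obstacle is bookkeeping with the shifted factorial $(x\mid\lambda)_n$ under these index shifts, and making sure the summation bounds in the reindexed recurrence of step one line up with those of the ansatz. Since the nontrivial combinatorial identity has already been packaged into Lemma \ref{lemB1}, no further computation is required beyond tracking these shifts, and the theorem follows.
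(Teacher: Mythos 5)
Your proposal is correct and follows essentially the same route as the paper: the paper's proof of Theorem \ref{thB1} is precisely the assembly of the inductive ansatz leading to recurrence \eqref{eqB2}, the telescoping of $b_0(N+1)=(N\lambda+1)b_0(N)$ into $(N\lambda+1\mid\lambda)_N$, and the citation of Lemma \ref{lemB1} for $j\geq1$. No substantive difference.
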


By considering the proof of Theorem \ref{thB1} and taking  $\lambda\to0$,
we obtain the following result.

\begin{theorem}\label{thB2}
Let $F=F(t)=\frac{1}{e^t-\mu}$. Then
$$F^{(N)}=(-1)^N\sum_{i=1}^{N+1}b_{i-1}(N;\mu)F^i,$$
where $b_0(N;\mu)=b_0(N-1;\mu), b_N(N;\mu)=\mu N b_{N-1}(N-1;\mu)$, and
$b_{i-1}(N;\mu)=ib_{i-1}(N-1;\mu)+\mu(i-1)b_{i-2}(N-1;\mu)$,
for $2\leq i\leq N$
with $b_0(0;\mu)=b_0(1;\mu)=1$ and $b_1(1;\mu)=\mu$.
\end{theorem}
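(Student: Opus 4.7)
The plan is to mirror the inductive argument used for Theorem \ref{thB1}, but carried out directly for $F(t)=1/(e^t-\mu)$; equivalently, one can let $\lambda\to 0$ in Theorem \ref{thB1} and in its proof. Under $\lambda\to 0$, the prefactor $(1+\lambda t)^{-N}$ appearing in Theorem \ref{thB1} tends to $1$, which is consistent with the simpler statement here, so it suffices to check that the recurrences controlling the coefficients behave as claimed in this limit.

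I would first differentiate $F$ directly to obtain
$$F^{(1)}=-\frac{e^t}{(e^t-\mu)^2}=-\frac{(e^t-\mu)+\mu}{(e^t-\mu)^2}=-(F+\mu F^2),$$
which is exactly the $\lambda=0$ specialization of \eqref{eqB1}. This fixes the base cases $b_0(1;\mu)=1$ and $b_1(1;\mu)=\mu$, while $F=F^{(0)}=b_0(0;\mu)F$ forces $b_0(0;\mu)=1$.

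Next I would set up the ansatz $F^{(N)}=(-1)^N\sum_{i=1}^{N+1}b_{i-1}(N;\mu)F^i$ and perform the inductive step. Differentiating the ansatz and substituting the formula above for $F^{(1)}$ yields, after regrouping,
$$F^{(N+1)}=(-1)^{N+1}\left(\sum_{i=1}^{N+1}i\,b_{i-1}(N;\mu)F^i+\sum_{i=2}^{N+2}\mu(i-1)b_{i-2}(N;\mu)F^i\right).$$
Matching coefficients of $F^i$ against $F^{(N+1)}=(-1)^{N+1}\sum_{i=1}^{N+2}b_{i-1}(N+1;\mu)F^i$ reads off the recurrence $b_{i-1}(N+1;\mu)=i\,b_{i-1}(N;\mu)+\mu(i-1)b_{i-2}(N;\mu)$ for $2\le i\le N+1$, together with the boundary identities $b_0(N+1;\mu)=b_0(N;\mu)$ (from $i=1$) and $b_{N+1}(N+1;\mu)=\mu(N+1)b_N(N;\mu)$ (from $i=N+2$). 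After the index shift $N\mapsto N-1$, these are precisely the relations stated in the theorem.

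I do not expect any genuine obstacle: the derivation is essentially the proof of Theorem \ref{thB1} with the factor $(N\lambda+i)$ in \eqref{eqB2} collapsed to $i$ and the $(1+\lambda t)^{-N}$ prefactor absent. The only point requiring mild care is verifying that these two limiting effects are mutually consistent, which is immediate since both the recurrence and the derivative of the prefactor are polynomial in $\lambda$, so the limit $\lambda\to 0$ commutes with the inductive construction.
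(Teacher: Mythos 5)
Your proposal is correct and follows essentially the same route as the paper, which simply observes that Theorem \ref{thB2} is obtained by letting $\lambda\to0$ in the proof of Theorem \ref{thB1}; your explicit computation of $F^{(1)}=-(F+\mu F^2)$ and the coefficient comparison is just that limiting argument written out in full. If anything, your version is more self-contained than the paper's one-line justification, but the underlying induction and recurrence are identical.
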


By the recurrence relation of $b_i(N;\mu)$, see Theorem
\ref{thB2}, and by induction on $N$, we obtain $b_0(N;\mu)=1$ and
$b_N(N;\mu)=\mu^NN!$, for all $N\geq0$.

Along the lines of the proof of Lemma \ref{lemB1} and taking $\lambda \to0$,
we obtain that the coefficients $b_j(N;\mu)$, $j=1,2,\ldots,N$,
satisfy
\begin{align}
b_j(N;\mu)=j\mu\sum_{i=0}^{N-j}(j+1)^ib_{j-1}(N-i-1;\mu)\mbox{ with
}b_0(N;\mu)=1.\label{eqlemB2}
\end{align}

{\bf Acknowledgement}: 
The present Research has been conducted by the Research Grant of Kwangwoon University in 2016.

%----------------------------------------------------------

\end{document}